\title{Asymptotic Translation Length in the Curve Complex}
\author{Aaron D. Valdivia}
\email{aaron.david.valdivia@gmail.com}
\address{Florida Southern College; 111 Lake Hollingsworth Drive; Lakeland, FL 33801-5698}
\keywords{Curve complex; translation length; asymptotic; pseudo-Anosov; mapping class group}
\subjclass{30F60, 32G15}
\newtheorem{prop}{Proposition}[section]
\newtheorem{lem}{Lemma}[section]
\newtheorem{thm}{Theorem}[section]
\newtheorem{defn}{Definition}[section]
\begin{document}

\begin{abstract}
We prove that the minimal pseudo-Anosov translation length in the curve complex behaves like $\frac{1}{\chi(S_{g,n})^2}$ for sequences where $g=rn$ for some $r\in\mathbb{Q}$.  We also show that if the genus is fixed as $n\rightarrow \infty$ then the behavior is $\frac{1}{\mid\chi_{g,n}\mid}$.  This extends results of Gadre and Tsai and answers a conjecture of theirs in the affirmative. 
\end{abstract}

\maketitle

\tableofcontents

\section{Introduction}
Let $S_{g,n}$ denote a surface with genus $g$ and $n$ punctures.  The set of homotopy classes of non-trivial simple closed curves on the surface is denoted by $\mathcal{S}$.  The curve complex $\mathcal{C}(S_{g,n})$, written $\mathcal{C}$ when there is no ambiguity, is the simplicial complex with a $0$-simplex for each element $c\in\mathcal{S}$ and an $n$-simplex for each $n-1$ tuple of disjoint elements of $\mathcal{S}$ where each $1$-simplex is given length $1$.  

The mapping class group, $\mbox{Mod}^+(S_{g,n})$, is the group of isotopy classes of orientation preserving homeomorphisms of the surface $S_{g,n}$.  The mapping class group has a natural action on the set $\mathcal{S}$ which gives an action on the curve complex $\mathcal{C}$ as a group of isometries.  The elements of $\mbox{Mod}^+(S_{g,n})$ are either periodic, reducible, or pseudo-Anosov by the Nielson-Thurston classification.  A pseudo-Anosov mapping class, $\phi$,  is a mapping class for which there exists a pair of measured singular foliations, $(\mathcal{F}^{\pm},\mu^{\pm})$ such that $\phi(\mathcal{F}^{\pm},\mu^{\pm})=(\mathcal{F}^{\pm}, \lambda^{\pm 1}\mu^{\pm})$ where $\lambda>1$ is called the dilatation.  In this paper we will investigate the pseudo-Anosov elements of the mapping class group in terms of their action on the curve complex.  We will concerned with an invariant called the asymptotic translation length of pseudo-Anosov elements in particular the minimal asymptotic translation length.  The asymptotic translation length of a pseudo-Anosov element $\phi$ is given by

$$l(\phi)=\liminf_{c\in\mathcal{S}}\lim_{n\rightarrow\infty}\frac{d(c,\phi^n(c))}{n}.$$

The minimal translation distance for a surface $S_{g,n}$ is 

$$L(S_{g,n})=\min_{\phi\in\mbox{Mod}^+(S_{g,n})}(l(\phi)).$$  

The dilatation $\lambda$ of a pseudo-Anosov mapping class is an invariant which gives the translation distance in the Teichmuller space of the surface in question.  The asymptotic translation distance is the analagous translation in the curve complex.   Our results show that the same asymptotic bounds hold for the action on the Teichmuller space and the curve complex.

We write $A$ is asymptotic with $B$ or the asymptotic behavior of $A$ is $B$ by $A\asymp B$, meaning that there is a constant $C>1$ such that $\frac{B}{C}\leq A\leq BC$.

Our results extend the work of Gadre and Tsai in \cite{gadretsai} where they prove the asymptotic behavior of the minimal translation distance for closed surfaces is $\frac{1}{\chi(S_{g,0})^2}$ where $\chi(S_{g,n})$ is the Euler characteristic of the surface $S_{g,n}$.  Our first theorem extends this result to the case $g=rn$.

\begin{thm}
If $g=rn$ such that $r\in\mathbb{Q}$ then 
$$L(S_{g,n})\asymp\frac{1}{\chi(S_{g,n})^2}.$$
\end{thm}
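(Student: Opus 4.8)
The statement combines a lower bound $L(S_{g,n})\geq c/\chi(S_{g,n})^2$ with a matching upper bound. Since $g=rn$ with $r>0$ gives $|\chi(S_{g,n})|=(2r+1)n-2$, hence $|\chi(S_{g,n})|\asymp n\asymp g$, the claim is equivalent to $L(S_{g,n})\asymp 1/n^2$. (The value $r=0$ is not in play: it forces $g=0$, where the genus is fixed and the correct order is the larger quantity $1/|\chi|$.) For the lower bound I would invoke the general estimate of Gadre and Tsai from \cite{gadretsai}, that there is a universal $c>0$ with $l(\phi)\geq c/|\chi(S)|^2$ for every pseudo-Anosov $\phi$ on every surface $S$; this is proved there by combining the bounded geodesic image theorem with a bound, in terms of $|\chi(S)|$, on the length of a maximal nested chain of large subsurface projections along the axis of $\phi$. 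Specializing to $S=S_{g,n}$ gives $L(S_{g,n})\geq c/|\chi(S_{g,n})|^2$, with no hypothesis relating $g$ and $n$.

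The new content of the theorem is the matching upper bound: for each $n$ with $rn\in\mathbb Z$ one must produce a pseudo-Anosov $\phi_n$ on $S_{rn,n}$ with $l(\phi_n)\leq C/n^2$. I would construct $\phi_n$ by a symmetric, chain-like Penner--Thurston configuration of the type Gadre--Tsai use in the closed case: $\phi_n=\rho_n\circ\tau_n$ is the composition of a finite-order mapping class $\rho_n$ of order $k_n\asymp n$ with a fixed product $\tau_n$ of boundedly many Dehn twists, arranged so that the power $\phi_n^{k_n}$ is pseudo-Anosov with dilatation bounded away from $1$ and $\infty$; then $\phi_n$ is pseudo-Anosov with $\log\lambda(\phi_n)\asymp 1/k_n\asymp 1/n$, and the building block of the configuration should carry enough genus and punctures that $g=rn$ holds exactly. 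The heart of the proof is the estimate $l(\phi_n)=O(1/n^2)$: following Gadre--Tsai, I would track the $\phi_n$-orbit of a simple closed curve $\alpha$ supported in a single block, observe that $\alpha$ and $\phi_n^{\,j}(\alpha)$ live in disjoint blocks (hence at bounded curve-complex distance) for $2\leq j\leq k_n-2$, and then show that over each period the subsurface projections of the orbit to every essential subsurface — the splitting annuli, the annuli about the punctures, and the blocks themselves — grow slowly enough that the curve-complex distance estimates of \cite{gadretsai} yield $l(\phi_n)=O(1/k_n^2)=O(1/n^2)$. For $n$ not divisible by the denominator of $r$ there is nothing to prove, as $g=rn\in\mathbb Z$ forces divisibility.

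The real obstacle is this estimate, uniformly in $n$. From $\log\lambda(\phi_n)\asymp 1/n$ and the coarse Lipschitz bound $l(\phi)=O(\log\lambda(\phi))$ one gets only $l(\phi_n)=O(1/n)$; extracting the extra factor of $1/n$ amounts to showing that the Teichmuller axis of $\phi_n$ spends all but an $O(1/n)$ proportion of each period in the thin part of Teichmuller space, so that its shadow in $\mathcal{C}$ moves $n$ times more slowly than $\log\lambda(\phi_n)$ would suggest — precisely the mechanism that separates the $g\asymp n$ regime, where $L\asymp 1/\chi^2$, from the fixed-genus regime, where $L\asymp 1/|\chi|$. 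Making this quantitative for the punctured building block, carefully accounting for the new annular subsurfaces around the punctures and checking that no constant degenerates as $n\to\infty$, is where the difficulty concentrates; a secondary nuisance is arranging that $g=rn$ holds exactly for every admissible $n$ rather than merely along a subsequence, which may call for a mild two-parameter version of the family.
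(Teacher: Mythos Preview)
Your lower bound is exactly what the paper does: it simply quotes the universal Gadre--Tsai bound $L(S_{g,n})\ge c/\chi(S_{g,n})^2$, so that part is fine.

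The gap is in the upper bound. You correctly set up a Penner-type family $\phi_m=\rho_m\circ(\text{bounded twists})$ with $\rho_m$ of order $m\asymp n$, but you never actually extract the second factor of $1/m$; you say yourself that disjointness of $\alpha$ and $\phi_m^{\,j}(\alpha)$ for $j\le m-2$ only gives $l(\phi_m)=O(1/m)$, and the subsurface-projection / thin-part paragraph is a description of what would have to be true, not an argument that it is. Nothing in your proposal controls the orbit over a time scale of order $m^2$.

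The paper's route avoids all of that machinery. It writes down the train-track transition matrix $M$ for $\phi_m$ and observes that $M^m$ is an $m\times m$ block matrix in which each application of $M^m$ spreads the support of a measure by at most one block to each side. Hence a standard basis vector $e_k$ sitting in the middle block has $M^{m(\lfloor m/2\rfloor-1)}e_k$ still vanishing on the last block. That means some curve in the last block is disjoint both from $e_k$ and from $\phi_m^{\,m(\lfloor m/2\rfloor-1)}(e_k)$, so
\[
d_{\mathcal C}\bigl(e_k,\ \phi_m^{\,m(\lfloor m/2\rfloor-1)}(e_k)\bigr)\le 2,
\qquad
l(\phi_m)\le \frac{2}{m(\lfloor m/2\rfloor-1)}\le \frac{4}{m^2-2m}.
\]
The missing idea in your proposal is precisely this ``one block per full period'' spreading rate of the transition matrix, which converts the exponent from $m$ to roughly $m^2/2$ without any appeal to bounded geodesic image, subsurface projections, or the thin part.
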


The proof relies on the lower bound on $L(S_{g,n})$ given in \cite{gadretsai} and an upper bound by examples constructed in \cite{valdivia}. Furthermore we prove Conjecture 6.2 of \cite{gadretsai}.  The proof of the conjecture requires a sharper lower bound for $n>>g$ which we acheive by applying some results about symmetric polynomials and techniques from \cite{Tsai} where Tsai gives a lower bound for the minimal dilatation of pseudo-Anosov mapping classes.

\begin{thm}
For $n>>g$ we have $L(S_{g,n})\geq\frac{1}{(9\alpha C+30)\mid\chi(S_{g,n})\mid-10n}$ where $\alpha C$ is a constant depending only on $g$.
\end{thm}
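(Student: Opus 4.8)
The plan is to retrace the lower-bound argument of \cite{gadretsai}, which estimates $\ell(\phi)$ from below by the reciprocal of a combinatorial quantity $N(\phi)$ attached to an invariant train track of $\phi$; in their analysis $N(\phi)$ is at most a fixed multiple of $|\chi(S_{g,n})|^2$, which is precisely what forces the bound $L\asymp 1/\chi^2$. The goal is to replace this by $N(\phi)\leq(\alpha C)\,|\chi(S_{g,n})|$, with $\alpha C$ depending only on $g$, in the range $n>>g$. Since the lower bound $L(S_{g,n})\geq 1/(C_0\,\chi(S_{g,n})^2)$ of \cite{gadretsai} is already available and the asserted right-hand side eventually exceeds it, it suffices to treat a pseudo-Anosov $\phi$ on $S_{g,n}$ with $\ell(\phi)$ small. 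Such a $\phi$, after replacing it by a bounded power (which only rescales $\ell$ and the constants), carries an invariant birecurrent train track $\tau$ whose number of branches $b$ is at most $9|\chi(S_{g,n})|-3n$ by the Penner--Harer complexity bound; this is the source of the coefficient $9$ and of the terms linear in $n$. Let $M$ be the non-negative integer transition matrix of $\phi$ on $\tau$, a $b\times b$ matrix whose Perron--Frobenius eigenvalue is the dilatation $\lambda>1$, so that $\det(tI-M)\in\mathbb{Z}[t]$ and, since $\lambda>1$, this polynomial is not a product of cyclotomic factors.

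First I would record, from \cite{gadretsai}, an inequality $\ell(\phi)\geq c_0/N(\phi)$ for a universal constant $c_0$, where $N(\phi)$ counts the elementary moves on $\tau$ needed for one period of the dynamics, equivalently the exponent to which the combinatorial data of $M$ must be iterated before it stabilizes; the universal constants in the distance-versus-intersection-number comparison underlying this inequality contribute the additive $30$ and the $10n$ after bookkeeping. The new step is to bound $N(\phi)$ linearly in $|\chi(S_{g,n})|$ when $n>>g$. The mechanism is this: if $N(\phi)$ exceeded a fixed multiple of $b$, the directed graph of $M$ would be forced to have large girth, so $M$ would differ from a permutation matrix supported on long cycles by a bounded perturbation and $\lambda$ would be pushed extremely close to $1$. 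Expanding the power sums $\mathrm{tr}(M^k)=\sum_i\mu_i^k$ by Newton's identities in terms of the integer elementary symmetric functions of the eigenvalues of $M$, and combining with the quantitative estimates of \cite{Tsai} bounding how close to $1$ the leading eigenvalue of such a matrix can be when $\det(tI-M)$ is non-cyclotomic, one contradicts the fact that $\lambda$ is a pseudo-Anosov dilatation on $S_{g,n}$. Tracking the dependence confines the girth of $M$, and hence $N(\phi)$ up to the factor $b$, to a bound governed by $g$ alone: here $\alpha$ plays the role of the complexity of the part of $\tau$ not arising from the $n$ punctures (bounded in terms of $g$), and $C$ is the numerical constant from the symmetric-polynomial estimate, so $\alpha C$ depends only on $g$. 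Combining $\ell(\phi)\geq c_0/N(\phi)$, $N(\phi)\leq(\alpha C)\,b$ and $b\leq 9|\chi(S_{g,n})|-3n$, and absorbing the universal additive constants, gives $L(S_{g,n})\geq\frac{1}{(9\alpha C+30)|\chi(S_{g,n})|-10n}$.

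The main obstacle is the uniformity of the girth bound: $M$ has size $b\asymp n$ growing with the number of punctures, so one must show that the symmetric-polynomial and trace estimates rule out the ``nearly a long-cycle permutation'' scenario with a bound depending on $g$ but not on $n$. Intuitively this reflects that a surface of fixed genus lacks the topological room away from its punctures to support an arbitrarily efficient slow pseudo-Anosov, in contrast with the regime $g=rn$ of the preceding theorem, where the bound really is quadratic. A secondary subtlety is that $\phi$ may permute the $n$ punctures by a permutation of large order, and one must check that this cannot be leveraged to inflate $N(\phi)$ past the genus-controlled bound. Once the genus-only girth bound is in place the remainder is bookkeeping: assembling the Penner--Harer bound, the Gadre--Tsai comparison, and the refined estimate for $N(\phi)$, and verifying that ``$n>>g$'' is exactly the hypothesis making the new bound an improvement on $1/\chi^2$.
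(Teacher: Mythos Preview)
Your proposal has a genuine gap: the mechanism you sketch for obtaining a constant depending only on $g$ does not work as stated, and it misses the key idea the paper actually uses.

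You try to bound the ``girth'' of the transition matrix $M$ by arguing that large girth forces the dilatation $\lambda$ close to $1$, and that this contradicts lower bounds on $\lambda$ from \cite{Tsai}. But $M$ is a $b\times b$ matrix with $b\asymp |\chi(S_{g,n})|\asymp n$, and Tsai's lower bound on $\log\lambda$ is itself of order $1/|\chi|$ (up to logarithmic factors), not bounded away from zero in terms of $g$ alone. Feeding these two estimates into each other gives at best a girth bound linear in $|\chi|$, which puts you back at the quadratic $1/\chi^2$ bound of \cite{gadretsai}, not the linear one claimed. You flag exactly this as ``the main obstacle,'' but the resolution you offer (Newton's identities on the eigenvalues of $M$) is applied to a matrix whose size grows with $n$, so it cannot by itself produce a constant depending only on $g$.

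The paper's argument is structurally different. The decisive move is to \emph{forget the punctures}: the pseudo-Anosov $\phi$ on $S_{g,n}$ descends to $\hat\phi$ on the closed surface $S_{g,0}$, where all the relevant linear algebra lives in dimension at most $2g$. One first finds $\alpha\leq F(g)$ so that $\hat\phi^{\alpha}$ is either the identity, a multitwist, or pseudo-Anosov on a subsurface (a Nielsen--Thurston type bound on $S_{g,0}$). Then the symmetric-polynomial lemma is applied not to the Markov matrix $M$ but to the $2g\times 2g$ action of $\hat\phi^{\alpha}$ on $H_1(S_{g_0,n_0};\mathbb{Z})$, giving a constant $C=C(g)$ with $\mathrm{Tr}>2$, hence Lefschetz number $Lf(\hat\phi^{\alpha C})<0$. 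By Tsai's Lefschetz lemma this forces a $1$ on the diagonal of the Markov partition matrix for $\phi^{\alpha C}$, i.e.\ a cycle of length one after a power bounded purely by $g$. From there the bound $k\leq (9\alpha C+30)|\chi|-10n$ follows by counting real and infinitesimal branches exactly as in \cite{gadretsai}, and the nesting lemma gives $\ell(\phi)\geq 1/k$. Your identification of $\alpha$ as ``the complexity of the part of $\tau$ not arising from the $n$ punctures'' is not what $\alpha$ is; both $\alpha$ and $C$ come from the closed surface $S_{g,0}$, and the Lefschetz number is the bridge back to the Markov partition on $S_{g,n}$. That bridge is entirely absent from your outline.
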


This lower bound along with another set of examples allows us to give the asymptotic behavior for fixed $g$ with $n$ varying.

\begin{thm}
Fixing $g>1$, as $n\rightarrow\infty$ the minimal translation length has behavior, $$L(S_{g,n})\asymp\frac{1}{\mid\chi(S_{g,n})\mid}.$$
\end{thm}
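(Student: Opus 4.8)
The plan is to sandwich $L(S_{g,n})$: the lower bound is immediate from Theorem~1.2, and the upper bound comes from exhibiting explicit pseudo-Anosov mapping classes whose orbits travel through $\mathcal{C}(S_{g,n})$ at speed $O(1/n)$.

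\emph{Lower bound.} Fix $g>1$. In Theorem~1.2 the quantity $\alpha C$ depends only on $g$, so it is a fixed constant $K=K(g)$ for the whole sequence. Using $|\chi(S_{g,n})|=n+2g-2$ we compute
$$(9\alpha C+30)|\chi(S_{g,n})|-10n=(9K+20)\,n+(9K+30)(2g-2),$$
which is positive and grows linearly in $n$. Since $g$ is fixed we have $n\le|\chi(S_{g,n})|\le 2n$ for $n$ large, so the displayed denominator is $\asymp|\chi(S_{g,n})|$, and Theorem~1.2 supplies a constant $C_1=C_1(g)>1$ with $L(S_{g,n})\ge\frac{1}{C_1\,|\chi(S_{g,n})|}$ for all large $n$.

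\emph{Upper bound.} It suffices to produce, for each large $n$, a pseudo-Anosov $\phi_n\in\mathrm{Mod}^+(S_{g,n})$ with $l(\phi_n)\le\frac{C_2}{|\chi(S_{g,n})|}$ for a constant $C_2=C_2(g)$. The elementary input is that for any pseudo-Anosov $\phi$, any vertex $c\in\mathcal{C}(S_{g,n})$, and any integer $k\ge 1$,
$$l(\phi)\ \le\ \lim_{m\to\infty}\frac{d(c,\phi^m(c))}{m}\ =\ \inf_{m\ge 1}\frac{d(c,\phi^m(c))}{m}\ \le\ \frac{d(c,\phi^k(c))}{k},$$
the middle equality being Fekete's lemma applied to the subadditive sequence $m\mapsto d(c,\phi^m(c))$ (subadditivity since $\phi$ acts by isometries). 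Thus one only needs a curve $c_n$ and an exponent $k_n$ with $k_n\gtrsim n$ and $d(c_n,\phi_n^{k_n}(c_n))=O(1)$. For the $\phi_n$ I would take the family of examples alluded to in the introduction as giving this upper bound (of the type constructed in \cite{valdivia}, specialized to hold $g$ fixed while $n\to\infty$): these are built to be compatible with a symmetry of order comparable to $n$ that cyclically permutes the punctures, so that a bounded curve $c_n$ supported essentially in one ``block'' of that symmetry is moved only a universally bounded curve-complex distance until roughly $n$ iterates of $\phi_n$ have been applied. One then verifies $d(c_n,\phi_n^{k_n}(c_n))\le C_3$ with $k_n=\lfloor c\,n\rfloor$, either by following a splitting sequence of the invariant train track of controlled combinatorial length, or via a subsurface-projection estimate. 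Combining the two bounds yields $\frac{1}{C_1|\chi(S_{g,n})|}\le L(S_{g,n})\le\frac{C_2}{|\chi(S_{g,n})|}$ for all large $n$, i.e. $L(S_{g,n})\asymp\frac{1}{|\chi(S_{g,n})|}$.

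\emph{Main obstacle.} The delicate point is the upper bound. It does \emph{not} follow formally from the dilatation (Teichm\"uller) estimates for the examples, because the curve-complex translation length can be far smaller than $\log\lambda$ predicts; the work lies in extracting, from the combinatorial description of $\phi_n$ (its invariant train track, or its action on a fixed curve system adapted to the cyclic symmetry), an explicit curve $c_n$ together with a number of steps linear in $n$ over which $c_n$ stays within bounded curve-complex distance of itself. Getting the right block of curves and controlling how the symmetry interacts with the pseudo-Anosov is where the technical care is needed; once that is in place the rest is bookkeeping with the inequality above.
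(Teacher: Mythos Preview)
Your lower bound is correct and is exactly what the paper does: plug Theorem~1.2 into the arithmetic $(9\alpha C+30)|\chi|-10n=(9\alpha C+20)n+(9\alpha C+30)(2g-2)\asymp |\chi(S_{g,n})|$ for fixed $g$.

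The upper bound, however, is not a proof but an outline of a proof you have not carried out. You propose to use the Penner-type sequences from \cite{valdivia}, ``specialized to hold $g$ fixed,'' and then say one ``verifies'' the key distance bound either via a splitting sequence or a subsurface-projection estimate, without doing either. Two problems. First, the sequences in \cite{valdivia} are constructed for rays $g=rn$ with $r\in\mathbb{Q}$ fixed; in the paper they yield $l(\phi_m)\le 4/(m^2-2m)\asymp 1/\chi^2$ and are used for Theorem~1.1, not Theorem~1.3. A specialization to fixed genus is not given there, and you have not supplied one. Second, even granting suitable examples, the passage ``one then verifies $d(c_n,\phi_n^{k_n}(c_n))\le C_3$'' is precisely the content that needs to be written down; without it the argument is incomplete.

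The paper's upper bound is much more concrete and avoids all of this machinery. It takes an explicit chain of simple closed curves $c_1,\dots,c_{2g+n}$ on $S_{g,n}$ and sets $\psi_{g,n}$ to be the product of Dehn twists along the chain, positive about the odd-indexed curves and negative about the even-indexed ones (so $\psi_{g,n}$ is pseudo-Anosov by Penner's criterion). One checks directly from the support of the twists that $\psi_{g,n}^n(c_{2g+n})$ is still disjoint from some curve disjoint from $c_{2g+n}$, giving $d_{\mathcal C}(c_{2g+n},\psi_{g,n}^n(c_{2g+n}))\le 2$ and hence $l(\psi_{g,n})\le 2/n\asymp 1/|\chi(S_{g,n})|$. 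If you want to complete your write-up, replacing your sketched upper bound with this explicit example is the cleanest fix.
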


The rest of the paper is organized as follows.  In Section 2 we will discuss background material including train tracks, Lefschetz numbers, and symmetric polynomials.  In Section 3 we will give a give the proof of our lower bound, Theorem 1.2, and in Section 4 we will provide examples for the upperbounds of Theorems 1.1 and 1.3 and will then finish the proofs of these theorems. 

\noindent\textbf{Acknowledgements:}  The author is indebted to Ian Agol for establishing the proof of lemma 2.2, to Ira Gessel for helpful clarifications on symmetric polynomials, and to Nathaniel Stambaugh for helpful converstations.

\section{Background}
\subsection{Train Tracks}
A \textit{train track}, $\sigma$, is a one dimensional CW complex embedded in a surface $S_{g,n}$ with a some extra conditions attached to it.  The vertices are called \textit{switches} and the edges are called \textit{branches}.  Each branch is embedded smoothly in $S_{g,n}$ and there is a definable tangent direction at each switch for all branches meeting at that switch.  Choosing a tangent direction at each switch we can then define \textit{incoming} and \textit{outgoing} branches at each switch.  We refer the reader to \cite{P-H} for a more detailed treatment of train tracks.  

A \textit{train route} is an immersed path on $\sigma$ where at each switch the path passes from an outgoing branch to an incomming branch or vice versa.  We say that a train track $\sigma_1$ is \textit{carried} by a track $\sigma_2$, or $\sigma_1 < \sigma_2$ if there is a homotopy $f: S_{g,n}\times \mathbb{I}\rightarrow S_{g,n}$ of $S_{g,n}$ such that $f(\sigma_1,0)= \sigma_1$, $f(\sigma_1,1)\subset\sigma_2$ and each train route is taken to another train route.  A smooth simple closed curve $\gamma$ is said to be carried by $\sigma$ if the homotopy  $f: S_{g,n}\times \mathbb{I}\rightarrow S_{g,n}$  $f(\gamma,0)=\gamma$ and $f(\gamma,1)$ is a train route.

To each train track $\sigma$ with $n$ branches we can also associate the set of $n$-tuples, called \textit{measures}, of non-negative numbers $w_i$, called \textit{weights}.  Further we require that at each switch the sum of weights on the incoming branches is equal to the sum weights on the outgoing branches. We denote this set of $n$-tuples by $P(\sigma)$.  If there is a measure on $\sigma$ which is positive then $\sigma$ is called recurrent.  The set of all positive measures is $int(P(\sigma))$.

A train track is called \textit{large} if all the complementary regions are polygons or once punctured polygons.  Every pseudo-Anosov $\phi$ has a large train track, $\sigma$, such that $\phi(\sigma)<\sigma$, this train track is called an \textit{invariant} train track.  The carrying induces a transition matrix M that records the train route that each branch is taken to under the pseudo-Anosov followed by the carrying map.  The Bestvina Handel Algorithm \cite{B-H} gives one such train track, $\sigma$, and transition matrix, $M$, associated to the mapping class $\phi$.  Each of the branches of $\sigma$ fall into one of two catagories, either \textit{real} or \textit{infinitesimal}.  There are at most $9\mid\chi(S_{g,n})\mid$ real braches and at most $24\mid\chi(S_{g,n})\mid-8n$ infinitesimal branches \cite{gadretsai} (cf \cite{B-H}).

The infinitesimal branches are permuted by the mapping class while the real branches stretch over the rest of the train track.  The transition matrix, $M$, has the form

$$M=\left(\begin{array}{cc}
A&B\\
0&M_{\mathcal{R}}
\end{array}\right).$$

Here $A$ is a permutation matrix corresponding to how the infinitesimal edges are permuted.  On the other hand there is a positive integer $m$ such that $M_{\mathcal{R}}^m$ is a positive matrix.  The matrix, $M_{\mathcal{R}}$, is called the Markov partition matrix for the pseudo-Anosov, $\phi$, and keeps track of the transistion between the real edges. 

If a train track, $\sigma$, is large then a \textit{diagonal extesion} of $\sigma$ is a train track which contains $\sigma$ as a subset and the branches not in $\sigma$ meet switches at the cusps of complementary regions of $\sigma$.  We denote the set of all diagonal extensions, which is finite, by $E(\sigma)$ and 

$$P(E(\sigma))=\cup_{\sigma_i\in E(\sigma)}P(\sigma_i).$$ 

\noindent
By $int P(E(\sigma))$ we mean all measures that are positive on the braches of $\sigma$.

In their investigation of the geometry of the curve complex Masur and Minsky \cite{MM} give a nesting behavior for the measures on train tracks of a surface.  In \cite{gadretsai} Gadre and Tsai prove that the requirement that the train track be birecurrent can be replaced by only recurrent, giving the following theorem.

\begin{thm}\cite{gadretsai} (cf \cite{MM})
Given a large recurrent train track $\sigma$

$$\mathcal{N}_1(int P(E(\sigma)))\subset P(E(\sigma)).$$

Where $\mathcal{N}_1(X)$ is the 1-neighborhood of $X$.
\end{thm}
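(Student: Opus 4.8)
Unwinding the statement, it is equivalent to two claims: first that $int\,P(E(\sigma))\subset P(E(\sigma))$, which is immediate from the definition $P(E(\sigma))=\cup_{\sigma_i\in E(\sigma)}P(\sigma_i)$; and second that, for every $b\in int\,P(E(\sigma))$ and every vertex $c$ of $\mathcal{C}$ with $i(c,b)=0$, the curve $c$ is carried by some diagonal extension of $\sigma$, i.e. $c\in P(E(\sigma))$. So the plan is to prove this second statement.

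The key is to use that $b$, being a measure positive on every branch of the \emph{large} track $\sigma$, fills $S$. First I would fix a diagonal extension $\sigma_0\in E(\sigma)$ carrying $b$ and realize $b$ inside a fibered neighborhood $N(\sigma_0)$, transverse to the ties. Because $b$ is positive on each branch of $\sigma$, its support $|b|$ crosses every tie lying over $\sigma$; hence each complementary component of $|b|$ is obtained from a complementary region of $\sigma_0$ (a polygon or once-punctured polygon, since diagonal extensions of large tracks are large) by absorbing cusp strips, and so is again a (once-punctured) polygon — the only exception being pieces bounded by closed leaves of $|b|$. It follows that an essential simple closed curve $c$ with $i(c,b)=0$ is either isotopic to a closed leaf of $|b|$, or can be isotoped into $N(\sigma_0)$ together with finitely many arcs running through the complementary polygons and joining their cusps.

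In the first case $c$ is a sublamination of $|b|$, hence carried by $\sigma_0$, so $c\in P(\sigma_0)\subset P(E(\sigma))$. In the second case I would put $c$ in efficient position — transverse to the ties, with no complementary bigons and no backtracking at switches — using $i(c,b)=0$ to see that the arcs of $c$ inside $N(\sigma)$ are monotone train routes. Then $c$ is carried by the track obtained from $\sigma$ by adjoining one branch for each arc that crosses a complementary region; since these branches have endpoints at cusps of complementary regions of $\sigma$, that track is by definition a diagonal extension of $\sigma$, and again $c\in P(E(\sigma))$. I expect this train-track surgery — verifying that the adjoined branches genuinely yield a train track (no new complementary bigons or smooth annuli) and that one may arrange the arcs to terminate at cusps of $\sigma$ rather than of $\sigma_0$ — to be the main technical obstacle.

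Finally, to obtain the improvement over Masur–Minsky I would isolate the one place their argument uses transverse recurrence of $\sigma$ — essentially to put curves in efficient position against $\sigma$ and to extract the requisite filling property — and observe that here this comes for free: a measure positive on the large track $\sigma$ already fills $S$, so the argument above uses only recurrence of $\sigma$ (to guarantee $int\,P(\sigma)\ne\emptyset$, hence that such a $b$ exists) together with largeness (to control the complementary regions). Confirming that transverse recurrence is genuinely dispensable in this way is the remaining point to be checked carefully.
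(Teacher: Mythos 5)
First, a point of reference: the paper does not prove this statement. It is imported verbatim from Gadre--Tsai, who adapt Masur--Minsky's nesting lemma from birecurrent to merely recurrent large tracks, so there is no in-paper argument to compare against. Your sketch follows the same route as the cited proof: reduce to showing that a vertex $c$ with $i(c,b)=0$ for some $b$ positive on every branch of $\sigma$ is carried by a diagonal extension, realize $b$ in a tie neighborhood $N(\sigma_0)$, and decompose $c$ into arcs in $N$ and arcs in the complementary polygons. The points you flag as technical (arcs terminating at cusps of $\sigma$ rather than $\sigma_0$, the adjoined diagonals forming a genuine track, and exactly where transverse recurrence is dispensed with) are indeed where the work lies, and your diagnosis of the last point --- that positivity of $b$ on every branch of a large track does the job that transverse recurrence does in Masur--Minsky --- is correct.

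There is, however, one genuine gap in the case analysis. Your dichotomy rests on the claim that the complementary components of $|b|$ are (once-punctured) polygons, with the ``exception'' of pieces bounded by closed leaves, and in that exceptional case you conclude only that $c$ is isotopic to a closed leaf of $|b|$. But in the intended application $b$ is a simple closed curve (a vertex of $\mathcal{C}$, e.g.\ $\phi^{ik}(\gamma)$ in the proof of Theorem 1.2), so \emph{every} complementary component of $|b|$ is a subsurface of negative Euler characteristic, and such a component contains essential curves $c$ not isotopic to $b$ (for $b$ nonseparating on a closed genus-$2$ surface, $S\setminus b$ is a twice-holed torus). Those $c$ are precisely the curves the lemma is about, and neither horn of your dichotomy captures them as written; also, such a $b$ does not ``fill'' $S$ in the usual sense. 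The repair is to run the efficient-position argument for \emph{all} $c$ disjoint from $b$, making no claim about the topology of $S\setminus|b|$: isotope $c$ within $S\setminus b$ to minimize its intersection with the ties of $N(\sigma)$; a bigon between $c$ and a tie over $\sigma$ must be free of $b$ (since $b$ meets every such tie and is efficiently carried, a strand of $b$ entering the bigon would create a bigon between $b$ and the tie), so it can be pushed across; likewise an arc of $c$ in a complementary polygon cutting off a cusp-free, puncture-free disk can be pushed in because the arcs of $b$ in that polygon run cusp to cusp. This forces $c$ to decompose into train routes over $\sigma$ and cusp-to-cusp diagonals, which is the carrying by a diagonal extension you want. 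With that substitution your outline agrees with the published proof.
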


\subsection{Lefschetz numbers}

We will now review the definition and basic properties of Lefschetz numbers.  A more detailed discussion can be found in \cite{DFAT} and \cite{DT}.

If $X$ is a compact oriented manifold and $\phi:X\rightarrow X$ is a continuous map then the graph of $\phi$ is given as the set

$$graph(\phi)=\{(x,\phi(x))\mid x\in X\}\subset X\times X.$$

The diagonal of $X\times X=\Delta$ and the algebraic intersection number $i(\Delta, graph(\phi))$ is the \textit{global Lefschetz number} also denoted $Lf(\phi)$.  The Lefschetz number is invariant up to homotopy and can be computed by the trace formula,

$$\sum_{i=0}^n(-1)^i\mbox{Tr}(f_i^{\ast}),$$

where $f_i^{\ast}$ is the induced map on the homology group $H_i(X,\mathbb{R})$.

Since the Lefschetz number of a mapping class is a homotopy invariant the Lefschetz of $\phi:S_{g,n}\rightarrow S_{g,n}$ can be computed by forgetting the marked points.

\begin{lem}\cite{Tsai}
If a mapping class $\phi$ is the identity or multitwist(after forgetting the marked points) then 

$$Lf(\phi)=2-2g.$$
\end{lem}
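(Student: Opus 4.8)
The plan is to reduce the statement to a computation on the closed surface $S_g$ and then apply the trace formula. Because the Lefschetz number depends only on the homotopy class of $\phi$ as a self-map of the underlying surface, and ``forgetting the marked points'' is exactly the passage to the image $\bar\phi$ of $\phi$ in $\mathrm{Mod}^+(S_g)$, it is enough to show $Lf(\bar\phi)=2-2g$ when $\bar\phi$ is the identity or a multitwist on $S_g$. The first move is to record the trace formula in this case: $H_0(S_g;\mathbb{R})$ and $H_2(S_g;\mathbb{R})$ are one-dimensional, the induced map on $H_0$ is the identity, and the induced map on $H_2$ is the identity because $\bar\phi$ preserves orientation, so each contributes $1$ to the alternating sum. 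Hence $Lf(\bar\phi)=2-\mathrm{Tr}\bigl(\bar\phi_\ast\colon H_1(S_g;\mathbb{R})\to H_1(S_g;\mathbb{R})\bigr)$, and the whole problem reduces to showing that the symplectic representation of $\bar\phi$ has trace $2g$.

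For the identity this is immediate, since $\bar\phi_\ast$ is the identity of the $2g$-dimensional space $H_1(S_g;\mathbb{R})$. For a multitwist, write $\bar\phi=T_{c_1}^{m_1}\cdots T_{c_k}^{m_k}$ with $c_1,\dots,c_k$ pairwise disjoint simple closed curves. Because disjoint curves have zero algebraic intersection, the transvections commute and all cross terms vanish, so on $H_1(S_g;\mathbb{R})$ one gets
$$\bar\phi_\ast(x)=x+\sum_{i=1}^{k} m_i\,\hat{\imath}(x,c_i)\,[c_i],$$
where $\hat{\imath}$ is the algebraic intersection pairing. Thus $\bar\phi_\ast=I+N$ with $N=\sum_i m_i N_i$ and $N_i$ the rank-at-most-one operator $x\mapsto \hat{\imath}(x,c_i)[c_i]$, i.e. $N_i=[c_i]\otimes\hat{\imath}(\cdot,c_i)$. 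The trace of such a rank-one operator is the value of the functional on the vector, namely $\hat{\imath}([c_i],[c_i])$, and this vanishes because the intersection form is alternating. Hence $\mathrm{Tr}(N)=\sum_i m_i\,\hat{\imath}([c_i],[c_i])=0$, so $\mathrm{Tr}(\bar\phi_\ast)=\mathrm{Tr}(I)=2g$ and $Lf(\bar\phi)=2-2g$. (Separating components cause no difficulty: there $[c_i]=0$ and the term is already zero.)

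The arithmetic here is routine; the step that deserves care is the reduction itself, namely the assertion that $Lf(\phi\colon S_{g,n}\to S_{g,n})$ is computed from the action on $H_\ast(S_g;\mathbb{R})$, so that the clean $H_0,H_1,H_2$ decomposition — with trivial action in degrees $0$ and $2$ — is available. Once that identification is fixed, the conclusion is forced by the alternating property of the symplectic form alone; in particular the argument is insensitive to the sign conventions in the Dehn-twist action on homology, since each potentially troublesome term equals $\pm\hat{\imath}([c_i],[c_i])=0$ regardless.
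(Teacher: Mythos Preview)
Your proof is correct. The paper does not supply its own proof of this lemma; it is simply quoted from \cite{Tsai}, with the sentence preceding the lemma (``Since the Lefschetz number of a mapping class is a homotopy invariant the Lefschetz of $\phi:S_{g,n}\rightarrow S_{g,n}$ can be computed by forgetting the marked points'') serving as the only justification for the reduction step. Your argument fills in exactly what that sentence and the citation leave implicit: the trace formula on $S_g$ gives $Lf(\bar\phi)=2-\mathrm{Tr}(\bar\phi_\ast\mid H_1)$, and the trace on $H_1$ is $2g$ because a multitwist along disjoint curves acts as $I+N$ with $\mathrm{Tr}(N)=\sum_i m_i\,\hat\imath([c_i],[c_i])=0$ by skew-symmetry of the intersection form. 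This is the standard computation and there is nothing to compare it against in the present paper.
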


\subsection{Symmetric Polynomials}
Here we will review the definitons of symmetric polynomials and power symmetric polynomials and develop a few key components of our proof for the new lower bound in Theorem 1.2.  For a more complete discussion of symmetric ploynomials we refer the reader to \cite{Mac}.

\begin{defn}  A partition is an $n$-tuple of non-negative integers $\lambda=(\lambda_1,\dots, \lambda_n)$ such that $\lambda_1\geq\lambda_2\geq\dots\geq\lambda_n$.  Each $\lambda_i$ is called a part of $\lambda$.  The length of $\lambda$ is denoted $l(\lambda)=n$, and the weight is the sum of the components, $\mid\lambda\mid=\displaystyle{\sum_i\lambda_i}.$
\end{defn}

The power symmetric polynomials, $p_k$, for $N$ variables, $x_1,\dots, x_N$ are defined as 

$$p_k(x)=\sum_{i=1}^Nx_i^k.$$

Furthermore we can define for any partition $\lambda$ the polynomial $p_{\lambda}=p_{\lambda_1}\dots p_{\lambda_n}.$

A symmetric polynomial in $N$ variables is a polynomial that is invariant under the action of the symmetric group $S_N$ on those $N$ variables.  These polynomials are generated by the elementary symmetric polynomials $e_n$, the sum of all products of $n$ distinct variables and we additionally define $e_0=1$.  For $n>0$ we have

$$e_n=\sum_{i_1<i_2<\dots<i_n}x_{i_1}x_{i_2}\dots x_{i_n},$$

and $e_n=0$ for $n>N$.

The generating function, $E(t)$, for $e_n$ gives a way to relate the elementry symmetric polynomails to polynomials of 1 variable.

$$E(t)=\sum_{n\geq 0}e_nt^n=\prod_{n\geq 1}(1+x_it)$$

Lastly we will need Newton's formula, 

$$ne_n=\sum_{r=1}^{n}(-1)^{r-1}p_re_{n-r},$$ 

in conjunction with another formula for $e_n$ given by (2.14') in \cite{Mac}.

$$e_n=\sum_{\mid\lambda\mid=n}\epsilon_{\lambda}z_{\lambda}^{-1}p_{\lambda}$$

Here $\epsilon_{\lambda}=(-1)^{\mid\lambda\mid-l(\lambda)}$ and $z_{\lambda}=\prod_{i\geq 1}i^{m_i}m_i!$ where $m_i=m_i(\lambda)$ is the number of parts of $\lambda$ equal to $i$.

We can obtain from a formula for $p_{N+1}$ in terms of the polynomails $p_1\dots p_n$ using Newtons formula for $n=N+1$ and the formula for $e_n$.

$$p_{N+1}=\sum_{r=1}^N\sum_{\mid\lambda\mid=N+1-r}(-1)^{2N+1-l(\lambda)}z_{\lambda}^{-1}p_{\lambda}p_r$$

\begin{lem}\cite{agolmo}
Consider the monic, reciprocal, degree $N$ polynomials $q(x)=\prod_{i=1}^N(x-\mu_i)\in\mathbb{R}[x]$.  If  $p_k(\mu)\leq \delta$ where $\delta>0$ for all $k\leq N(N+1)$ then the polynomial $q(x)$ has bounded coefficients.
\end{lem}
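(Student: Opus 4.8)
The plan is to bound each elementary symmetric polynomial $e_n(\mu)$ for $1 \le n \le N$ in terms of the hypothesized bound $\delta$ on the power sums $p_k(\mu)$, and then observe that since $q$ is monic and reciprocal, $q(x) = \prod_{i=1}^N (x - \mu_i)$, its coefficients are exactly $\pm e_n(\mu)$ for $0 \le n \le N$. So controlling the $e_n$ controls all the coefficients. The reciprocal hypothesis is there to guarantee the relevant list of roots is finite and closed under $x \mapsto x^{-1}$; it is also a natural setting since $q$ will later be a factor of the characteristic polynomial of a Perron--Frobenius-type matrix.

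First I would use the identity $e_n = \sum_{|\lambda| = n} \epsilon_\lambda z_\lambda^{-1} p_\lambda$ quoted as (2.14$'$) from \cite{Mac}. Since $p_\lambda = p_{\lambda_1} \cdots p_{\lambda_{l(\lambda)}}$ and each factor $p_{\lambda_i}$ with $\lambda_i \le N$ is bounded in absolute value by $\delta$ (here I only need the hypothesis for $k \le N$, which is well within the stated range $k \le N(N+1)$), we get $|p_\lambda| \le \delta^{l(\lambda)} \le \max(\delta, \delta^N) =: \delta'$, a bound depending only on $\delta$ and $N$. The coefficients $\epsilon_\lambda z_\lambda^{-1}$ are rationals depending only on $\lambda$, hence only on $n \le N$, so $|e_n| \le \delta' \sum_{|\lambda| = n} z_\lambda^{-1}$; the number of partitions of $n$ and the values $z_\lambda^{-1}$ are all absolutely bounded once $N$ is fixed. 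Therefore each $|e_n|$, and hence each coefficient of $q$, is bounded by a constant depending only on $N$ and $\delta$.

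I expect the only subtlety to be bookkeeping: making explicit that "bounded coefficients" is the correct reading of the conclusion, i.e. a bound depending only on $N$ and $\delta$ but uniform over all such polynomials $q$ — which is exactly what is needed in the application, where $N$ ranges over a family and $\delta$ is the fixed constant $\alpha C$ depending only on $g$. An alternative route, perhaps closer to the flavor of the displayed formula for $p_{N+1}$ in terms of $p_1, \dots, p_N$, would be to argue inductively: one does not actually need $p_k$ for $k > N$ to bound the coefficients of a degree-$N$ polynomial, so the extra range $k \le N(N+1)$ in the hypothesis is presumably carried along because later applications need to bound power sums of a \emph{product} of several such polynomials, or equivalently to iterate the recursion for $p_{N+1}, p_{N+2}, \dots$ and keep those bounded too. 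I would flag that the $N(N+1)$ bound is exactly what lets one run the recursion $p_{N+j} = (\text{polynomial in } p_1, \dots, p_{N+j-1})$ far enough, but for the bare statement as written, the one-line argument from (2.14$'$) suffices. The main obstacle, such as it is, is simply being careful that all implied constants depend on nothing but $N$ and $\delta$.
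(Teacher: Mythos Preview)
There is a genuine gap: you have misread the hypothesis. The lemma assumes only the one-sided bound $p_k(\mu)\le\delta$ for $k\le N(N+1)$, not $|p_k(\mu)|\le\delta$. Your sentence ``each factor $p_{\lambda_i}$ with $\lambda_i\le N$ is bounded in absolute value by $\delta$'' is exactly where the argument fails: nothing in the hypothesis prevents some $p_k$ from being very negative. With only an upper bound on the $p_k$, the formula $e_n=\sum_{|\lambda|=n}\epsilon_\lambda z_\lambda^{-1}p_\lambda$ gives no control on $|e_n|$, since the terms can be arbitrarily large with alternating signs.

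This is not a technicality --- it is the entire content of the lemma, and it explains the range $k\le N(N+1)$ that puzzled you. The paper's proof works by showing that if some $p_k$ with $k\le N$ were very negative, say $p_k<-R$ with $R$ large, then the recursion expressing $p_{k(k+1)}$ as a polynomial in $p_k,\dots,p_{Nk}$ (obtained from Newton's formula and (2.14$'$), applied to the variables $\mu_i^k$) forces $p_{k(k+1)}>\delta$. Since $k(k+1)\le N(N+1)$, this contradicts the hypothesis. Only after this step does one know $-R<p_1,\dots,p_N\le\delta$, and \emph{then} your argument via (2.14$'$) finishes the proof. In the application (Lemma~3.3), the one-sidedness is essential: one has $Lf(\phi^k)=2-\mathrm{Tr}(A^k)\ge 0$, i.e.\ $p_k(\mu)\le 2$, with no a priori lower bound on the trace.
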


\begin{proof}
First we observe that there is a term $(-1)^{N+1}p_1^{N+1}$ in the expression for $p_{N+1}$.  

Now assume that $p_1\dots p_N<0$ and $p_1<-R$ for some $R>>0$.  Then each term is positive since there are $l(\lambda)+1$ power symmetric factors in each term and a factor of $(-1)^{l(\lambda)-1}$.  Then we see that $p_{N+1}>\frac{R^{N+1}}{N!}>\delta$ for large enough $R$.  If some subset of $p_2\dots p_N\leq\delta$ are positive and $p_1<-R$ then the negative terms come from terms with an odd number positive power symmetric factors.  The group of terms with $j$ positive power symmetric factors can be paired with a term which replaces each positive power symmetric factor $p_{q_j}$ with $p_1^{q_j}$, this term is positive and dominates the negative terms if $R$ is large enough.

If instead $p_k<-R$ for some $1<k\leq N$ then from above we have that $p_{k(k+1)}>\delta$ and $k(k+1)\leq N(N+1)$.

So the case we are left with is when $-R<p_1\dots p_N<\delta$.  Each monic, reciprocal polynomial of degree $N$ can be written with the elementary symmetric polynomials through the generating function as $\prod_1^N(\mu_i t-1)=\sum_0^Ne_it^i$.  In turn we can write each elementary symmetric polynomial as function of the power symmetric polynomials of whose values we have restricted to a compact set.  Therefore the polynomial $q(x)$ has bounded coefficients.  

\end{proof}

\section{The lower bound}

The proof of the lower bound for curve complex translation length follows Tsai and Gadre's proof for their lower bound but includes elements of Tsai's proof of the lower bound on minimal dilitation in order to acheive a lower bound when $n>>g$.

\begin{lem}\label{rectangleintersect}\cite{Tsai}
For any pseudo-Anosov mapping class $\phi\in\mbox{Mod}(S_{g,n})$ equipped with a Markov partition, if $Lf(\phi)<0$ then there exists a rectangle $R$ of the Markov partition such that R and $\phi(R)$ intersect(i.e. there is a one on the diagonal of the Markov partition matrix). 
\end{lem}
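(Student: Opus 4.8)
The plan is to compute $Lf(\phi)$ via a standard-form representative of $\phi$ adapted to its invariant foliations, and hence to the Markov partition, so that the fixed point set is finite and $Lf(\phi)$ equals, by the Lefschetz--Hopf theorem and the homotopy invariance noted above, the sum of the local Lefschetz indices over the fixed points. It then suffices to prove the contrapositive: if $M_{\mathcal{R}}$ has no $1$ on the diagonal --- equivalently, no rectangle $R$ of the Markov partition has $\phi(R)$ crossing $R$ --- then every fixed point has non-negative index, so $Lf(\phi)\ge 0$.

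First I would set up the trichotomy for a fixed point $x$ of the standard-form map. The surface is covered by the rectangles of the Markov partition together with the finitely many infinitesimal polygons around the singularities, the rectangles meeting only along arcs of leaves of the foliations; so $x$ lies either (i) in the interior of some rectangle $R_i$; (ii) at a singularity of the foliations; or (iii) on the boundary of the Markov partition, or inside an infinitesimal polygon, but not at a singularity. Case (iii) is impossible: such an $x$ lies on a leaf $\ell$ with $\phi(\ell)=\ell$, and since $\phi$ strictly expands or contracts along $\ell$ its only fixed point on $\ell$ is the singularity at the base of $\ell$, putting us in case (ii). In case (i), $x\in R_i\cap\phi(R_i)$ with $x$ interior to $R_i$; by the Markov property $\phi(R_i)\cap R_j$ is, for each $j$, empty or a union of sub-rectangles running the full unstable length of $R_j$, so $\phi(R_i)\cap R_i$ is a nonempty set of this kind and $(M_{\mathcal{R}})_{ii}\ge 1$.

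Next I would treat case (ii). Let $s$ be a fixed $p$-pronged singularity. Since $\phi$ is orientation preserving and preserves each of the two foliations, it rotates the $2p$ separatrices at $s$ by $\frac{2\pi k}{p}$ for some $k\in\{0,1,\dots,p-1\}$. If $k=0$, every prong at $s$ is fixed, and for a rectangle $R_i$ incident to $s$ the unstable separatrix along an edge of $R_i$ is fixed; since $\phi$ expands along it while fixing $s$, the image $\phi(R_i)$ contains a sub-rectangle spanning $R_i$ in the unstable direction, so again $(M_{\mathcal{R}})_{ii}\ge 1$. If $k\ne 0$, no separatrix at $s$ is fixed and a computation of the degree of $\mathrm{id}-\phi$ on a small circle about $s$ gives Lefschetz index $+1$ at $s$ (for $p=2$ this is the familiar statement that a regular fixed point at which $\phi$ reverses the local orientation of both foliations has index $+1$). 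Combining: if $M_{\mathcal{R}}$ has no diagonal $1$, then (i) and the $k=0$ subcase of (ii) do not occur, so every fixed point is a singularity with nontrivially rotated prongs, of index $+1$; hence $Lf(\phi)=\#\{\mbox{fixed points}\}\ge 0$.

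The main obstacle is the case-(ii) bookkeeping: one must argue carefully that a fixed singularity all of whose prongs are fixed really forces an essential self-crossing of a real rectangle --- a genuine $1$ on the diagonal of $M_{\mathcal{R}}$ rather than just an incidental intersection at the singular point --- which is where the position of the real rectangles relative to the infinitesimal polygon around $s$ enters; and one must pin down the index $+1$ at a singularity with rotated prongs, for which I would use the standard local normal forms for pseudo-Anosov maps near a singularity (as in \cite{DFAT} and \cite{DT}) together with the orientation-preservation of $\phi$. The remaining ingredients --- Lefschetz--Hopf, the existence of a standard form, and the Markov property of $\phi(R_i)\cap R_j$ --- are classical.
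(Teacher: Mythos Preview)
The paper does not give its own proof of this lemma; it is quoted from \cite{Tsai} and used as a black box in the proof of Lemma 3.3. So there is nothing in the present paper to compare against. Your outline is the standard Lefschetz--index argument and is essentially the proof one finds in Tsai's paper: realize $\phi$ by its pseudo-Anosov representative, compute $Lf(\phi)$ as the sum of local indices, and show that in the absence of a diagonal entry in $M_{\mathcal R}$ every fixed point is a singularity whose prongs are nontrivially rotated, hence of index $+1$, giving $Lf(\phi)\ge 0$.

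Two small points worth tightening. In case (iii) your claim that ``the only fixed point on $\ell$ is the singularity at the base of $\ell$'' implicitly uses that the boundary of the Markov partition (and the infinitesimal polygons) lies on the stable and unstable separatrices of the singular set; this holds for the partitions produced by the Bestvina--Handel algorithm, but you should say so, since for an arbitrary Markov partition a regular fixed point can sit on a boundary arc that is not a separatrix. Second, in the $k=0$ subcase of (ii) you want to make explicit that the fixed unstable prong at $s$ is the full unstable side of some real rectangle $R_i$ (again a feature of the Bestvina--Handel partition), so that expansion along that prong genuinely yields a Markov crossing $\phi(R_i)\supset R_i$ in the unstable direction and hence $(M_{\mathcal R})_{ii}\ge 1$, rather than a mere corner incidence. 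With those two standing assumptions stated, your argument is complete and matches the source.
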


\begin{lem}\label{tsaiforgetfull}\cite[lemma 3.2]{Tsai}
Given $\phi\in\mbox{Mod}(S_{g,n})$ let $\hat{\phi}\in\mbox{Mod}(S_{g,0})$ be the mapping class induced by forgetting the marked points.  Then there exists a constant $0< \alpha\leq F(g)$ such that $\hat{\phi}^{\alpha}$ satisfies one of the following.

\begin{itemize}
\item (1) $\hat{\phi}^{\alpha}$ is pseudo-Anosov on a connected subsurface.
\item (2) $\hat{\phi}^{\alpha}=id$.
\item (3) $\hat{\phi}^{\alpha}$ is a multitwist.

\end{itemize}

Where the upper bound on $\alpha$, $F(g)$, is a function only of $g$.
\end{lem}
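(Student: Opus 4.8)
Since this is Tsai's Lemma~3.2 from \cite{Tsai}, the plan is to reproduce that argument, whose engine is the Nielsen--Thurston classification applied to $\hat{\phi}$ on the \emph{closed} surface $S_{g,0}$; the three alternatives correspond to the pseudo-Anosov, periodic, and reducible cases, and the reason every bound stays controlled by $g$ is that all the combinatorial data is read off from $S_{g,0}$, which has no punctures. First I would handle the two easy cases. If $\hat{\phi}$ is pseudo-Anosov, take $\alpha=1$ and let the connected subsurface in alternative~(1) be all of $S_{g,0}$. If $\hat{\phi}$ is periodic, take $\alpha$ to be its order: a periodic mapping class of $S_{g,0}$ has order at most $84(g-1)$ for $g\geq 2$ by the Hurwitz bound, and at most $6$ for $g\leq 1$, so $\hat{\phi}^{\alpha}=\mathrm{id}$ with $\alpha\leq F(g)$, which is alternative~(2).

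The substantive case is $\hat{\phi}$ reducible. I would take $C$ to be its canonical reduction system, so $|C|\leq 3g-3$, and let $Y_1,\dots,Y_k$ be the components of $S_{g,0}$ cut along $C$; here $k$, and the genus and number of boundary curves of each $Y_i$, are all bounded in terms of $g$. A power $\alpha_1$ bounded in terms of $g$ fixes every curve of $C$ and every piece $Y_i$ setwise, so that the first return map of $\hat{\phi}^{\alpha_1}$ to each non-annular $Y_i$ is pseudo-Anosov or periodic, while on an annular neighborhood of each curve of $C$ it is a power of a Dehn twist. Multiplying $\alpha_1$ by the least common multiple of the orders of the periodic return maps --- each bounded in terms of the bounded complexity of its piece, hence in terms of $g$ --- produces $\alpha\leq F(g)$ for which $\hat{\phi}^{\alpha}$ is the identity on every non--pseudo-Anosov piece, a power of a Dehn twist on the annulus around each curve of $C$, and pseudo-Anosov on every pseudo-Anosov piece.

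If there is no pseudo-Anosov piece, then $\hat{\phi}^{\alpha}$ is a multitwist, alternative~(3), or the identity, alternative~(2), if all the twisting exponents vanish. If there is one, I would argue there is exactly one, say $Y$, so that $\hat{\phi}^{\alpha}$ is pseudo-Anosov on the connected subsurface $Y$ (with at worst some Dehn twisting along the curves of $C$ bounding $Y$, which belongs to the reducible normal form of a map that is pseudo-Anosov on $Y$), which is alternative~(1). The input for uniqueness is that $\phi$ itself is pseudo-Anosov: its invariant foliations are minimal and fill $S_{g,n}$, and $\hat{\phi}$ can fail to be pseudo-Anosov only because some punctures carry one-pronged singularities, whose filling produces forbidden one-pronged interior points. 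One then shows that this degeneration is confined to a neighborhood of those punctures, so the genuine pseudo-Anosov dynamics of $\phi$ descends to a single connected subsurface of $S_{g,0}$, which must be $Y$.

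The step I expect to be the main obstacle is precisely this localization: tracking how the invariant foliation of $\phi$ degenerates at one-pronged punctures when they are filled and showing the damage is local. This is the core of Tsai's argument, and it is where one must be careful about measured laminations versus measured foliations, since the naive filling of a one-pronged puncture violates the Euler--Poincar\'e count and so is not a measured foliation on $S_{g,0}$ at all. The remaining point, the uniformity of the bound on $\alpha$, is by contrast straightforward: because the classification is applied on the closed surface $S_{g,0}$, the reduction system has at most $3g-3$ curves and the complementary pieces have complexity bounded purely in terms of $g$, with no dependence on $n$.
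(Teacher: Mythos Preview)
The paper does not supply a proof of this lemma at all: it is quoted verbatim as \cite[Lemma~3.2]{Tsai} and used as a black box in the proof of Lemma~3.3. So there is no ``paper's own proof'' to compare against; your reconstruction via the Nielsen--Thurston decomposition on the closed surface is the natural route and is in substance correct.

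That said, you have created an artificial obstacle for yourself. Alternative~(1) asserts only that $\hat{\phi}^{\alpha}$ is pseudo-Anosov on \emph{a} connected subsurface, i.e., that at least one Nielsen--Thurston piece is pseudo-Anosov; it does not assert that the complement is trivial or that the pseudo-Anosov piece is unique. Once you have passed to a power (bounded in terms of $g$) that fixes the reduction system and each complementary component, and then to a further bounded power killing every periodic piece, the trichotomy is immediate: either some piece is pseudo-Anosov (case~(1), regardless of what happens on the other pieces), or every piece is the identity and the map is a multitwist along the reduction curves (case~(3)), or even those twists vanish (case~(2)). No uniqueness is needed, and the localization argument you flag as ``the main obstacle'' --- tracking how one-pronged singularities degenerate under filling --- is simply not required for this statement. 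Note also that the lemma as stated does not assume $\phi$ is pseudo-Anosov, so invoking the invariant foliations of $\phi$ to force uniqueness would in any case be adding a hypothesis; for an arbitrary $\phi$ there can certainly be several pseudo-Anosov pieces.
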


\begin{lem}
In either of cases (1), (2), or (3) we have $Lf(\hat{\phi}^{\alpha C})<0$.
\end{lem}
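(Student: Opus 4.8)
The plan is to reduce all three cases of Lemma~\ref{tsaiforgetfull} to an application of Lemma~2.2 (the Lefschetz computation for the identity or a multitwist) together with the known formula for the Lefschetz number of a pseudo-Anosov on a subsurface, after first passing from $\hat\phi^\alpha$ to a suitable further power $\hat\phi^{\alpha C}$. The constant $C$ should be chosen precisely so that whatever ``good'' structure $\hat\phi^\alpha$ has in each case is upgraded to a form whose Lefschetz number is visibly negative. Since $g>1$ throughout (the surfaces in question have negative Euler characteristic and the pseudo-Anosov pieces land on subsurfaces of genus at least one for the ranges we care about), the quantity $2-2g$ appearing in Lemma~2.2 is already negative, so cases (2) and (3) are essentially immediate.

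First I would dispatch cases (2) and (3). If $\hat\phi^\alpha=\mathrm{id}$ or $\hat\phi^\alpha$ is a multitwist on the closed surface $S_{g,0}$, then by Lemma~2.2 we have $Lf(\hat\phi^\alpha)=2-2g<0$, and any positive power of the identity is the identity while any positive power of a multitwist is again a multitwist, so $Lf(\hat\phi^{\alpha C})=2-2g<0$ as well; here $C$ may be taken to be $1$ for these cases. Next, case (1): $\hat\phi^\alpha$ is pseudo-Anosov on a connected subsurface $Y\subset S_{g,0}$ and is the identity (or a multitwist) on the complement. Pseudo-Anosov maps have periodic behavior on their fixed-point data (singularities and their separatrices are permuted), so there is a uniformly bounded power --- call the bound $C$, depending only on the topological type of $Y$, hence only on $g$ --- such that $\hat\phi^{\alpha C}$ restricted to $Y$ fixes every singularity and every prong, and is a pseudo-Anosov whose Lefschetz number on $Y$ is computed by the standard fixed-point count. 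The key point is that for a pseudo-Anosov on a surface with this prong-fixing property the Lefschetz number is negative: the index of each interior fixed point is $1-p/2\le -1$ for a $p$-pronged singularity with $p\ge 3$ (and the boundary/puncture contributions are controlled similarly), so summing gives a strictly negative number. Combining the contribution from $Y$ with the contribution $2-2g_{S\setminus Y}$-type terms from the complement (again via Lemma~2.2 applied to the complementary pieces, which $\hat\phi^{\alpha C}$ acts on as identity or multitwist), and using additivity of the Lefschetz number over a decomposition along curves, yields $Lf(\hat\phi^{\alpha C})<0$.

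I expect the main obstacle to be case (1), specifically making the bound $C$ on the power needed to fix all the prong data depend only on $g$ and not on the particular subsurface $Y$ or the particular pseudo-Anosov --- this requires knowing that the number of singularities and prongs of a pseudo-Anosov on a subsurface of $S_{g,0}$ is bounded in terms of $g$ alone, which follows from the Euler--Poincar\'e (Gauss--Bonnet) formula for measured foliations, together with a clean bookkeeping of how the Lefschetz numbers of the pieces add up across the curves along which $S_{g,0}$ is cut. I would also need to be slightly careful that $\hat\phi$ is taken on the closed surface $S_{g,0}$ (marked points forgotten), so there are no puncture contributions to worry about in the ambient count, only those intrinsic to the invariant foliation of the pseudo-Anosov piece; Lemma~2.2 and the homotopy-invariance of $Lf$ are exactly what let us work on the closed surface. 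Once the uniform bound on the prong data is in hand, the sign computation is the routine fixed-point-index count sketched above.
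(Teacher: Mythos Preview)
Your handling of cases (2) and (3) matches the paper. For case (1), however, you take a route entirely different from the paper's, and your version has genuine gaps.

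The paper does not compute $Lf$ via local fixed-point indices of the pseudo-Anosov piece. It works homologically: the restriction of $\hat\phi^{\alpha}$ to the subsurface $S_{g_0,n_0}$ acts on $H_1(S_{g_0,n_0},\mathbb{Z})$ by an integer matrix $A$ of size at most $2g$, and the Lefschetz number is governed by $\mathrm{Tr}(A^k)$. The entire purpose of the symmetric-polynomial material in \S2.3, and in particular of Lemma~2.2 (the Agol lemma), is to show that only finitely many monic reciprocal integer polynomials can have power sums $p_k\le 2$ for all $k\le 2g(2g+1)$; hence for some $C$ bounded purely in terms of $g$ one has $\mathrm{Tr}(A^C)>2$, forcing $Lf(\hat\phi^{\alpha C})<0$. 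You never invoke Lemma~2.2, so you miss the paper's actual mechanism for producing a $C$ uniform in $g$. (A minor point: what you cite as ``Lemma~2.2 (the Lefschetz computation for the identity or a multitwist)'' is Lemma~2.1 in the paper; Lemma~2.2 is the symmetric-polynomial result you omit.)

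Your alternative --- bound the prong data by Euler--Poincar\'e, pass to a power fixing all singularities and separatrices, then sum local indices and add contributions across the decomposition --- is a plausible strategy, but as written it does not go through. First, the Lefschetz index at a fixed $p$-pronged singularity with all separatrices fixed is $1-p$, not $1-p/2$; the quantity $1-p/2$ is the Poincar\'e--Hopf index of the foliation singularity, a different invariant, and in any case your stated inequality $1-p/2\le -1$ fails already at $p=3$. Second, ``additivity of the Lefschetz number over a decomposition along curves'' is not a ready-made fact: if $\hat\phi^{\alpha C}$ is the identity on the complement of $Y$ in $S_{g,0}$, the fixed set there is the entire complement, so one cannot sum isolated-index contributions, and a Mayer--Vietoris or relative-Lefschetz argument with boundary correction terms is genuinely required. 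You flag this as the expected obstacle, but it is a real gap rather than a routine check, and it is precisely what the paper's homological route via Lemma~2.2 sidesteps.
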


\begin{proof}  

We address cases 2 and 3 first.  If $\hat{\phi}^{\alpha}$ is the identity or a multitwist map then so is $\hat{\phi}^{\alpha C}$ and so $Lf(\hat{\phi}^{\alpha C})<0$ by lemma 2.1.

In case 1 we have $\phi^{\alpha}$ is a pseudo-Anosov mapping class on a connected subsurface $S_{g_0, n_0}$ such that $2g_0+n_0<2g$.  Therefore the action of $\phi^{\alpha}$ on $H_1(S_{g_0,n_0}, \mathbb{Z})$ is given by a matrix $A$ of dimension at most $2g\times 2g$.  Lemma 2.2 tells us there are finitely many monic reciprocal polynomials with roots $\mu=(\mu_1\dots\mu_2)$ such that $p_k(\mu)\leq 2$ for some $k<2g(2g+1)$.  Let that finite number of polynomials be $C$.  If the characteristic polynomial of $\phi^n$ never leaves the set of $C$ polynomials then the roots are periodic and some iterate of $\phi$ will have action on homology with all eigenvalues equal to 1.  Otherwise some iterate leaves the finite set and so there is a constant $C$ depending only on $g$ such that $\mbox{Tr}(A^C)>2$ and therefore $L(\phi^{\alpha C})<0$. 

\end{proof}

This gives a positive diagonal entry in the Markov partition's transition matrix for the mapping class $\phi^{\alpha C}$ by lemma 3.1.  The number $\alpha C$ is only dependent on the genus of the surface in question.

\begin{prop}\cite[Lemma 5.2 case 2]{gadretsai}
If $\sigma_0\in E(\tau)$ and $\mu\in P(\sigma_0)$ then in at most $j\leq 6\mid\chi(S_{g,n})\mid-2n$ iterates $\phi^j(\mu)$ is positive on some real branch of $\sigma_j\in E(\tau)$.
\end{prop}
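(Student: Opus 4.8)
The plan is to reduce the statement to the behaviour of a single diagonal branch of $\sigma_0$ under iteration and then to run a combinatorial counting argument whose length is controlled by the number of cusps of $\tau$. First I would dispose of the easy and degenerate cases. If $\mu$ is already positive on a real branch of $\sigma_0$, take $j=0$; otherwise the support of $\mu$ is a subtrack of the union of the infinitesimal and diagonal branches of $\sigma_0$. If $\mu$ is supported entirely on infinitesimal branches, then, since the infinitesimal branches of a Bestvina--Handel track bound the infinitesimal complementary polygons, the carried multicurve $|\mu|$ consists of curves bounding (possibly once-punctured) disks, hence is peripheral or trivial in $\mathcal{C}(S_{g,n})$ and there is nothing to prove. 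So we may assume $\mu$ assigns positive weight to some diagonal branch $d$ of $\sigma_0$.

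Next I would set up the iteration. Since $\tau$ is $\phi$-invariant ($\phi(\tau)\prec\tau$) and diagonal extensions are carried to diagonal extensions, $\phi$ preserves $P(E(\tau))$: for each $j$ there is $\sigma_j\in E(\tau)$ with $\phi^j(\mu)\in P(\sigma_j)$, obtained by carrying $\phi(\sigma_{j-1})$ into a diagonal extension of $\tau$. The weight $\mu$ places on $d$ flows, under $\phi$ followed by this carrying, onto exactly the branches of $\sigma_1$ traversed by the carried image $\overline{\phi(d)}$, a train path of $\sigma_1$ joining two cusps of $\tau$. If $\overline{\phi(d)}$ runs over a real branch we are done with $j=1$; otherwise $\overline{\phi(d)}$ is a concatenation of infinitesimal and diagonal branches of $\sigma_1$, so $\phi(\mu)$ is positive on some diagonal branch of $\sigma_1$ and the argument iterates.

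The heart of the proof is the claim that this real-free iteration cannot persist for more than $6|\chi(S_{g,n})|-2n$ steps. That quantity is precisely the largest number of cusps a large train track on $S_{g,n}$ can carry (it equals $2|\chi(S_{g,n})|+2p$, where $p\le 4g-4+n$ is the number of unpunctured complementary polygons), and the point is that every diagonal branch is anchored at two cusps of $\tau$. Following the analysis of \cite[Lemma 5.2]{gadretsai}, one monitors the configuration of cusps carrying the iterated diagonal branches, equivalently the diagonal part of $\mathrm{supp}(\phi^j(\mu))$; so long as no real branch is hit this configuration cannot stall, so within at most $6|\chi(S_{g,n})|-2n$ iterations either a real branch appears or the configuration repeats. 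A repetition would produce a $\phi$-periodic family of diagonal branches, hence a $\phi$-invariant sublamination disjoint from all real branches, contradicting the pseudo-Anosov hypothesis. Therefore some $j\le 6|\chi(S_{g,n})|-2n$ works.

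I expect the main obstacle to be exactly this last step: identifying the correct monovariant on the diagonal-branch configurations and showing that it strictly advances at every real-free iteration (so that the process genuinely cannot idle), together with the verification that a repeated configuration really does violate the pseudo-Anosov condition. The remaining ingredients --- the degenerate-case reduction, the invariance $\phi(P(E(\tau)))\subseteq P(E(\tau))$, and the bookkeeping of how weight propagates along carried images --- are routine once the definitions and results of Section 2 are available.
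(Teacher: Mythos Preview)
The paper does not actually supply a proof of this proposition: it is quoted verbatim from \cite[Lemma~5.2, case~2]{gadretsai} and used only as a black box in the proof of Lemma~3.5. So there is no in-paper argument to compare your proposal against.

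That said, your sketch has a genuine gap precisely where you flag it. Your final step is a pigeonhole/periodicity argument: you track the ``configuration of cusps carrying the iterated diagonal branches'' and assert that after $6|\chi|-2n$ real-free steps the configuration must repeat. But $6|\chi|-2n$ is (an upper bound for) the \emph{number} of cusps, not the number of such configurations, which is exponential in that number; so pigeonhole alone does not force a repetition within $6|\chi|-2n$ steps. You would need an honest monovariant that strictly increases (or decreases) at every real-free step and takes at most $6|\chi|-2n$ values, and you have not identified one. Moreover, even if a configuration did repeat, your conclusion that this yields a $\phi$-invariant sublamination disjoint from the real branches is not justified: equality of diagonal supports at two times does not by itself produce a $\phi$-periodic measured lamination, since the weights and the ambient extensions $\sigma_j$ may differ.

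The reductions in your first two paragraphs (disposing of the case where $\mu$ already hits a real branch, ruling out purely infinitesimal support, and propagating weight along the carried image of a diagonal) are fine and do match the setup in \cite{gadretsai}. What is missing is the actual mechanism that forces the bound $6|\chi|-2n$; for that you should consult the original argument in \cite[Lemma~5.2]{gadretsai} rather than try to reinvent it, since the present paper does not reproduce it.
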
 

\begin{lem}
There exists a positive integer $k\leq (9\alpha C +30)\mid\chi(S_{g,n})\mid-10n$ such that $\phi^k(\mu)$ is positive on every branch of $\tau$ where $\mu\in P(\sigma_0)$ and $\sigma_0\in E(\tau)$.
\end{lem}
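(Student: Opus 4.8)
The plan is to combine the Lefschetz-number machinery developed in Lemmas 3.1, 3.2, and the unnumbered Lemma above with the "spreading" statement of Proposition 3.1 (Gadre–Tsai Lemma 5.2 case 2) and the nesting Theorem 2.1 of Gadre–Tsai. The strategy mirrors Gadre–Tsai's argument for closed surfaces, but replaces their genus-dependent power by the explicit constant $\alpha C$ produced by Lemma 3.2 and the unnumbered lemma, which is what makes the bound uniform as $n$ grows.

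First I would use the unnumbered lemma together with Lemma 3.1 (Tsai): since $Lf(\hat\phi^{\alpha C})<0$, there is a rectangle $R$ of the Markov partition with $R\cap\phi^{\alpha C}(R)\neq\emptyset$, i.e.\ the Markov partition matrix $M_{\mathcal R}$ for $\phi^{\alpha C}$ has a $1$ on the diagonal. Equivalently, some real branch $b$ of $\tau$ is carried over itself by $\phi^{\alpha C}$. Next, starting from $\mu\in P(\sigma_0)$ with $\sigma_0\in E(\tau)$, apply Proposition 3.1: after at most $j\leq 6\mid\chi(S_{g,n})\mid-2n$ iterates of $\phi$, the weight $\phi^j(\mu)$ is positive on some real branch of a diagonal extension $\sigma_j\in E(\tau)$. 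Then I would iterate by $\phi^{\alpha C}$ using the diagonal entry: because $b$ is recurrent under $\phi^{\alpha C}$ and the real part $M_{\mathcal R}$ of the transition matrix becomes positive after a bounded power (there is $m$ with $M_{\mathcal R}^m>0$, and one bounds $m$ in terms of the number of real branches, hence by a multiple of $\mid\chi(S_{g,n})\mid$), after a further controlled number of steps $\phi^{k_0}(\mu)$ is positive on every real branch of some $\sigma\in E(\tau)$, hence positive on every real branch of $\tau$ itself. The real branches are what $E(\tau)$-positivity "sees," so this is the content of being in $\mathrm{int}\,P(E(\tau))$.

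Finally, to upgrade from "positive on every real branch" to "positive on every branch of $\tau$" (including infinitesimal branches), I would invoke Theorem 2.1: once $\phi^{k_0}(\mu)\in \mathrm{int}\,P(E(\tau))$, one more step $\phi^{k_0+1}(\mu)\in\mathcal N_1(\mathrm{int}\,P(E(\tau)))\subset P(E(\tau))$, and iterating $\phi$ keeps us in $P(E(\tau))$; combined with the infinitesimal branches being merely permuted by $\phi$ (the block $A$ is a permutation matrix), after at most the order of that permutation — again bounded by a multiple of the number of infinitesimal branches, i.e.\ by $24\mid\chi(S_{g,n})\mid-8n$ — the measure becomes positive on every infinitesimal branch as well. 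Adding the three bounded contributions, $\alpha C$ times the recurrence/positivity delays plus the $6\mid\chi(S_{g,n})\mid-2n$ from Proposition 3.1 plus the infinitesimal-branch contribution, and bookkeeping the coefficients of $\mid\chi(S_{g,n})\mid$ and of $n$ carefully, yields $k\leq(9\alpha C+30)\mid\chi(S_{g,n})\mid-10n$.

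The main obstacle I expect is the precise arithmetic of the constants: one must track how many iterates of $\phi$ (not $\phi^{\alpha C}$) are needed at each stage, keep the $\alpha C$ factor attached only to the stages that genuinely require the Lefschetz-positivity input, and verify that the crude bounds $9\mid\chi\mid$ on real branches and $24\mid\chi\mid-8n$ on infinitesimal branches assemble to exactly the claimed coefficients $9\alpha C+30$ and $-10$. In particular one has to be careful that the "$-2n$" from Proposition 3.1 and the "$-8n$" from the infinitesimal branch count combine with the stretching step to give "$-10n$" rather than something weaker; this is the kind of place where an off-by-a-multiple-of-$n$ slip would break the subsequent asymptotic $\frac{1}{\mid\chi(S_{g,n})\mid}$ in Theorem 1.3, so it deserves the most care.
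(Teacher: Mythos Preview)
Your approach is essentially the paper's. The paper's proof runs: (i) the Lefschetz argument gives a diagonal $1$ in $M_{\mathcal R}$ for $\phi^{\alpha C}$; (ii) Proposition~2.4 of Tsai then makes $M_{\mathcal R}$ for $\phi^{\alpha C r}$ positive for some $r\le 9\mid\chi\mid$, costing $9\alpha C\mid\chi\mid$ iterates of $\phi$; (iii) Proposition~3.1 contributes the $6\mid\chi\mid-2n$ iterates needed to land on a real branch, giving $(9\alpha C+6)\mid\chi\mid-2n$ to be positive on all real branches; (iv) a further $24\mid\chi\mid-8n$ iterates handle the infinitesimal branches, since there are at most that many and $A$ permutes them. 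Summing gives exactly $(9\alpha C+30)\mid\chi\mid-10n$, and your arithmetic matches this.

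One correction: your detour through Theorem~2.1 (the nesting lemma) in the infinitesimal step is unnecessary and, as written, misapplied --- the inclusion in Theorem~2.1 goes the other way, and in any case it says nothing about positivity on infinitesimal branches. The paper does not use nesting here at all; it simply observes that once all real branches carry positive weight they remain positive under further iteration, and the off-diagonal block $B$ together with the permutation $A$ then forces every infinitesimal branch to become positive within at most $24\mid\chi\mid-8n$ more steps. The nesting lemma enters only afterwards, in the proof of Theorem~1.2, to turn this lemma into a lower bound on $l(\phi)$.
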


\begin{proof}

By Lemma 3.3 above we see that $\phi^{\alpha C}$ has a one on the diagonal of the transition matrix for the Markov partition.  By prop 2.4 of \cite{Tsai} we see that the Markov partition matrix for $\phi^{\alpha C r}$ is positive for some $r\leq 9\mid\chi(S_{g,n})\mid$ and by lemma 3.4 we require at most $6\mid\chi(S_{g,n})\mid-2n$ iterates to be positive on a real branch.  Therefore in $(9\alpha C+6)\mid\chi(S_{g,n})\mid-2n$ iterations we will be positive on all real branches of $\tau$.  Since there are at most $24\mid\chi(S_{g,n})\mid-8n$ infinitesimal branches we require an aditional $24\mid\chi(S_{g,n})\mid-8n$ iterations to be positive on every branch.

\end{proof}

\begin{proof}[Proof of Theorem 1.2]  let $\phi:S_{g,n}\rightarrow S_{g,n}$ be a pseudo-Anosov with invariant train track $\sigma$.  Then by Lemma 3.5 there is an iterate 

$$k\leq 9\alpha C\mid\chi(S_{g,n})\mid+30\mid\chi(S_{g,n})\mid-10n$$ 

such that given a measure $\mu$ on $\sigma_0\in E(\sigma)$, $\phi^k(\mu)\in int(P(E(\sigma)))$ giving the inclusion $\phi(PE((\sigma)))\subset int(P(E(\sigma)))$.  Then using the nesting lemma (Theorem 2.1) we get the sequence of inclusions

$$P(\sigma_{i+1})\subset int(P(E(\sigma_i)))\subset \mathcal{N}_1(int(P(E(\sigma_i))))\subset \dots$$
$$int(P(E(\sigma_2)))\subset \mathcal{N}_1(P(E(\sigma_2)))\subset int(P(E(\sigma))) \subset \mathcal{N}_1(int(P(E(\sigma))))\subset P(E(\sigma)).$$

\bigskip

Then if we choose a curve $\gamma\in\mathcal{C}(S_{g,n})\backslash P(E(\sigma))$ we have $\phi^{ik}(\gamma)\in P(E(\sigma_i))$ but not in $P(E(\sigma_{i+1}))$.  We then have $d_{\mathcal{C}}(\gamma, \phi^{ik}(\gamma))\geq i$ giving 

$$l(\phi^{k})=\liminf_{i\rightarrow \infty}\frac{d_{\mathcal{C}}(\gamma, \phi^{ik}(\gamma))}{i}\geq \liminf_{i\rightarrow \infty}\frac{i}{i}=1.$$

Then using the formula $l(\phi^{n})=n l(\phi)$ we get 

$$l(\phi)\geq \frac{1}{k}.$$

\end{proof}

This gives us a better lower bound for $n>>g$, the lower bound for Theorem 1.3.

\section{Upper bounds by example and asymptotic behavior}
In this section we will describe 2 types of examples for the upper bound.  The first are the examples defined in \cite{valdivia} for rational rays defined by $g=rn$ for $r\in\mathbb{Q}$.  The second are a series of examples giving upper bounds for rays with fixed $g$.  The second set of examples are the ones we use to answer Conjecture 6.2 of \cite{gadretsai}.

The first set of examples we will consider are called Penner sequences.  These examples are generalize the examples Penner uses to give asymptotic conditions for the minimal dilatation on closed surfaces.  Before defining a Penner sequence we will need to build some notation.  First we pick a surface $S_{g,n,b}$ with $2g-2+n>0$ where $g$ is the genus, $n$ is the number of fixed points, and $b$ is the number of boundary components.  Let $\Sigma$ be homeomorphic to the surface $S_{g,n,b}$ and then consider $\Sigma_i$ to be a homeomorphic copy of $\Sigma$ for each integer $i$ with homeomorphism 

$$h_i:S_{g,n}\rightarrow \Sigma_i.$$  

We then pick two disjoint homeomorphic subsets of the boundary components, $a^+$ and $a^-$, on $\Sigma$ which gives homeomorphic copies on $\Sigma_i$, $a_i^+$ and $a_i^-$.  Then we have orientation reversing homeomorphisms 

$$\iota_i : a_i^+\rightarrow a_{i+1}^-.$$

We can then construct a surface $F_{\infty}$ which is the collection of the $\Sigma_i$ with identifications made corresponding to the $\iota_j$.  There is homeomorphism of $F_{\infty}$ to itself given by 

$$\rho(x)=h_{i+1}(h_i^{-1}(x))$$ 

where $x\in \Sigma_i$.  We can also define the surfaces $F_m=F_{\infty}\slash \rho^m$ and the quotient map $\pi_m:F_{\infty}\rightarrow F_m$.  The map $\rho$ then pushes forward to a map $\rho_m:F_m\rightarrow F_m$ which is periodic on $F_m$.  After the construction $F_m$ may have boundary components or punctures that are left invariant by the action of $\rho_m$ this may be filled in by points or discs.

We then make a choice of 2 multicurves $C$ and $D$ on $\Sigma_1$ and multicurve $\gamma\subset \Sigma_1\cup\Sigma_2$ such that 

$$\{\rho^n(C\cup\gamma)\}_{n=-\infty}^{\infty}$$

is a multicurve and

$$\{\rho^n(C\cup\gamma\cup D)\}_{n=-\infty}^{\infty}$$ 

fills $F_{\infty}$ and intersects efficeintly.  Last given the semigroup $R(C^+,D^-)$ generated by positive Dehn twists about curves in $C$ and negative Dehn twists about curves in $D$ we pick a pseudo-Anosov word $\omega\in R(C^+,D^-)$.  By pseudo-Anosov word we mean that it is pseudo-Anosov on $\Sigma_1$.  

\begin{defn}
A Penner sequence is a sequence of mapping classes $\phi_m$ such that for some choice of $\omega\in R(C^+,D^-)$ and $\gamma$ 

$$\phi_m=\rho_m d_{\pi_m(\gamma)} \pi_m(\omega).$$
\end{defn}

Mapping classes of this form are all pseudo-Anosov \cite{valdivia} (cf. \cite{Pen88}) and there is a sequence for any sequence of surfaces with $g=rn$ for some $r\in\mathbb{Q}$.

In \cite{valdivia} the train track transition matrix for these mapping classes is also given.  Let $M$ be the train track transition matrix for the pseudo-Anosov $\phi_m$.  Then each mapping class $\phi_m^m$ has the following form as a block matrix where the $n$th block corresponds to the measures induced by $\rho_m^n(\pi_m(C\cup D\cup\gamma))$. 

$$M^m=\left(\begin{array}{ccccccc}
A& D& 0& 0& .& 0& F\\
B& E& G& 0& .& 0& F^2\\
0& F& H& G& .& 0& 0\\
.& 0& F& H& .& 0& .\\
.& .& 0& F& .& 0& .\\
.& .& .& 0& .& G& .\\
.& .& .& .& .& H& G\\
C& 0& 0& 0& .& F& H
\end{array}\right)$$

Where $M^m$ is a $m\times m$ block matrix of $r\times r$ blocks.  Given a standard basis vector $e_i$ such that $r(n-1)<i\leq rn$ and $n\neq 1$ or $m$, $M^me_i$ is the sum of basis vectors $\{e_j\}$ such that $r(n-2)< j\leq r(n+1)$.  This means that if we pick a standard basis vector $e_k$ such that if $m>3$ is even $r(\lfloor\frac{m}{2}\rfloor-1)<k \leq\lfloor\frac{m}{2}\rfloor$ then we have $M^{\lfloor\frac{m}{2}\rfloor-1}e_k$ is zero in the last $r$ entries.  If $m>3$ is odd then we pick $e_k$ such that $r\lfloor\frac{m}{2}\rfloor<k \leq\lceil\frac{m}{2}\rceil$ and we also then get that $M^{\lfloor\frac{m}{2}\rfloor-1}e_k$ is zero in the last $r$ entries.  Using this fact we see that 
$$d_C(e_k, \phi_m^{m(\lfloor\frac{m}{2}\rfloor-1)}(e_k))\leq 2$$
 
Therefore we have $l(\phi_m^{m(\lfloor\frac{m}{2}\rfloor-1)})\leq 2$ and 

$$l(\phi_m)\leq \frac{2}{m(\lfloor\frac{m}{2}\rfloor-1)}\leq \frac{4}{m^2-2m}.$$

This gives an upper bound for all rational rays through the origin finishing the proof of Theorem 1.1.

The second set of examples is simpler.  If you consider the curves in figure 2 with numerical labeling $c_1\dots c_{2g+n}$ from left to right we can easliy see that the mapping class, $\psi_{g,n}$ defined by a positive or negative Dehn twist about each curve $c_i$ starting with $c_1$ and ending with $c_{2g+n}$ where we perform a positive twist about each odd curve and a negative twist about each even curve then we see that $d_C(\psi_{g,n}^n(c_{2g+n}),c_{2g+n})=2$ giving $l(\psi_{g,n})\leq\frac{2}{n}$.  This gives the upper bound for Theorem 1.3 which completes the proof.  

\begin{figure}
\centering\includegraphics{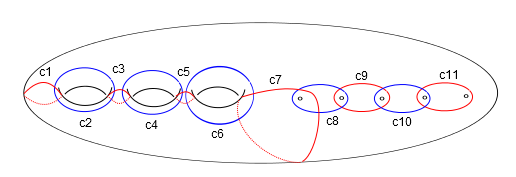}
\caption{Curves $c_i$ for the mapping class $\psi_{3,5}$.}
\end{figure}

\bibliography{mybib}
\bibliographystyle{alpha}

\end{document}